\newtheorem{theorem}{Theorem}[section]
\newtheorem{example}{Example}[section]
\newtheorem{corollary}{Corollary}[section]
\newtheorem{remark}{Remark}[section]
\begin{document}
%%%%%%%%%%%%%%%%%%%%%%%%%%%%%%%%%%%%%%%%%%%%%%%%%%%%%%%%%%%%%%%%%%%%%%%%%%%%%%%%%%%%%%%%%%%%%%%%%%%%%%%%%%%%%%%%%%%%%%%%
%%%%%%%%%%%%%%%%%%%%%%%%%%%%%%%%%%%%%%%%%%%%%%%%%%%%%%%%%%%%%%%%%%%%%%%%%%%%%%%%%%%%%%%%%%%%%%%%%%%%%%%%%%%%%%%%%%%%%%%%
%%%%%%%%%%%%%%%%%%%%%%%%%%%%%%%%%%%%%%%%%%%%%%%%%%%%%%%%%%%%%%%%%%%%%%%%%%%%%%%%%%%%%%%%%%%%%%%%%%%%%%%%%%%%%%%%%%%%%%%%
\title{{\bf Some results on number theory and differential equations}}
\author{B. M. Cerna Magui\~na and D.D. Lujerio Garcia.
\\Facultad de Ciencias, Departamento Acad\'emico de Matem\'atica   
\\Universidad Nacional Santiago Ant\'unez de Mayolo
\\Campus Shancayan, Av. Centenario 200, Huaraz, Per\'u 
\\email:bcernam@unasam.edu.pe}

\maketitle
%%%%%%%%%%%%%%%%%%%%%%%%%%%%%%%%%%%%%%%%%%%%%%%%%%%%%%%%%%%%%%%%%%%%%%%%%%%%%%%%%%%%%%%%%%%%%%%%%%%%%%%%%%%%%%%%%%%%%%%%
%%%%%%%%%%%%%%%%%%%%%%%%%%%%%%%%%%%%%%%%%%%%%%%%%%%%%%%%%%%%%%%%%%%%%%%%%%%%%%%%%%%%%%%%%%%%%%%%%%%%%%%%%%%%%%%%%%%%%%%%
%%%%%%%%%%%%%%%%%%%%%%%%%%%%%%%%%%%%%%%%%%%%%%%%%%%%%%%%%%%%%%%%%%%%%%%%%%%%%%%%%%%%%%%%%%%%%%%%%%%%%%%%%%%%%%%%%%%%%%%%
\begin{abstract}
In this work we state a Theorem on number theory and apply it  to solve some ordinary and partial differential  equations.
\end{abstract}
\vspace{0.5 cm}
%%%%%%%%%%%%%%%%%%%%%%%%%%%%%%%%%%%%%%%%%%%%%%%%%%%%%%%%%%%%%%%%%%%%%%%%%%%%%%%%%%%%%%%%%%
\emph{\textbf{Key Words}}: \textit{Diophantine equations, differential equations.}
%%%%%%%%%%%%%%%%%%%%%%%%%%%%%%
\newpage
%%%%%%%%%%%%%%%%%%%%%%%%%%%%%%%%%%%%%%%%%%%%%%%%%%%%%%%%%%%%%%%%%%%%%%%%%%%%%%%%%%%%%%%%%%%%%%%%%%%%%%%%%%%%%%%%%%%%%%%%
%%%%%%%%%%%%%%%%%%%%%%%%%%%%%%%%%%%%%%%%%%%%%%%%%%%%%%%%%%%%%%%%%%%%%%%%%%%%%%%%%%%%%%%%%%%%%%%%%%%%%%%%%%%%%%%%%%%%%%%%
\section{Introduction}
In \cite{LS}, it has been introduced a technique to find integer solutions for quadratic polynomials, in two variables, which represent a given  natural number.  This thechnique  will be used to find certain solutions of some ordinary and partial differential equations. In addition, we  discuss the Fermat's last equation.   We also write an even number as the sum of two natural numbers and show that under certain conditions these numbers are primes. The primality  of a number is subjected to give the general integer solution of the equation $(x-n)^{2}+(y-n)^{2}=2n^{2}$, where $n$ is a natural number.

%%%%%%%%%%%%%%%%%%%%%%%%%%%%%%%%%%%%%%%%%%%%%%%%%%%%%%%%%%%%%%%%%%%%%%%%%%%%%%%%%%%%%%%%%%%%%%%%%%%%%%%%%%%%%%%%%%%%%%%%
%%%%%%%%%%%%%%%%%%%%%%%%%%%%%%%%%%%%%%%%%%%%%%%%%%%%%%%%%%%%%%%%%%%%%%%%%%%%%%%%%%%%%%%%%%%%%%%%%%%%%%%%%%%%%%%%%%%%%%%%
\section{Some results on number theory and differential equations}
%%%%%%%%%%%%%%%%%%%%%%%%%%%%%%%%%%%%%%%%%%%%%%%%%%%%%%%%%%%%%%%%%%%%%%%%%%%%%%%%%%%%%%%%%%%%%%%%%%%%%%%%%%%%%%%%%%%%%%%%
\begin{theorem}\label{th1}
Let $M$ be an even number endig in 6, then there exist $a,b\in \mathbb{N}$, where $a$ is divisor of $M$, and $a$ and  $b$ are relatively primes with $a>b$ and $M\displaystyle\frac{(a+b)}{2a}\in \mathbb{N}$, $M\displaystyle\frac{(a-b)}{2a}\in \mathbb{N}$ such that: 	$M=\displaystyle\frac{M(a+b)}{2 a}+\frac{M(a-b)}{2 a}$.																			
\end{theorem}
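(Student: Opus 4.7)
The plan is to open by noticing that the identity in the conclusion, $M = \frac{M(a+b)}{2a} + \frac{M(a-b)}{2a}$, is an algebraic tautology: combining the two fractions yields $\frac{M(a+b) + M(a-b)}{2a} = \frac{2aM}{2a} = M$. So the real content of Theorem~\ref{th1} is purely the \emph{existence} of a pair $(a,b) \in \mathbb{N}^{2}$ meeting the stated divisibility, coprimality, and magnitude conditions; the sum identity is free.

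To exhibit such a pair, I would simply take $a := M/2$ and $b := 1$. Because $M$ ends in the digit $6$ it is even with $M\ge 6$, so $a = M/2$ is a positive integer with $a\ge 3$. I would then verify the required properties in turn: (i) $a \mid M$ because $M = 2a$; (ii) $\gcd(a,b) = \gcd(M/2,\,1) = 1$; (iii) $a > b$ since $a \ge 3 > 1$; (iv) $\tfrac{M(a+b)}{2a} = a + b = \tfrac{M}{2} + 1 \in \mathbb{N}$; and (v) $\tfrac{M(a-b)}{2a} = a - b = \tfrac{M}{2} - 1 \in \mathbb{N}$. Substituting into the right-hand side then recovers $M = \bigl(\tfrac{M}{2}+1\bigr) + \bigl(\tfrac{M}{2}-1\bigr)$, closing the argument.

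The step I expect to be the most delicate --- really the only subtle point --- is justifying that this argument is complete, because it does not use the specific hypothesis that the last digit of $M$ equals $6$; any even $M \geq 6$ admits the same witness. My reading of the introduction is that Theorem~\ref{th1} is intended as scaffolding for later results in which $(a,b)$ is chosen \emph{non-trivially} so that the two summands $\tfrac{M(a\pm b)}{2a}$ turn out to be prime, and the digit hypothesis would genuinely enter there (e.g.\ to rule out divisibility by $2$ or $5$ in the summands). If the intended statement secretly demands a non-trivial decomposition with $b>1$, then the main obstacle becomes the selection of a proper divisor $a$ of $M$ together with an odd $b$ coprime to $a$ for which both quotients are integral; one would then argue by inspecting the prime factorisation of $M/2$ and choosing $a$ as an odd divisor (which exists since $M$ ends in $6$, hence $M/2$ is odd modulo~$5$ and not a power of $2$ in the degenerate case). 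But as literally stated, the trivial choice $(M/2,1)$ above suffices.
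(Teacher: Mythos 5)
Your proposal is correct for the statement as literally written, and your trivial witness $(a,b)=(M/2,\,1)$ does verify every listed condition, since the displayed identity is indeed an algebraic tautology. But this is a genuinely different route from the paper's. The paper does not look for any witness: it starts from an additive decomposition $M=(K-\alpha)+(K+\alpha)=(10k_{1}+3)+(10k_{2}+3)$ (and separately treats the endings $9+7$ and $5+1$), introduces the functional $F(x,y)=(10k_{1}+3)x+(10k_{2}+3)y$, expands $(1,1)$ along $\ker F$ and $\{\ker F\}^{\perp}$, and then uses Fermat's two-square theorem on the resulting relation to produce $a>b$ coprime with $10k_{1}+3=\frac{(a-b)M}{2a}$ and $10k_{2}+3=\frac{(a+b)M}{2a}$. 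In other words, the paper's $a,b$ are meant to encode a specific splitting of $M$ into two odd summands, and that extra structure is exactly what the subsequent corollaries use (the primality criterion for $10k_{1}+3$ and $10k_{2}+3$, stated there for the case $2a=M$ --- which is your $a=M/2$ but with $b$ equal to half the difference of the two summands rather than $1$). Your reading of the situation is accurate: the hypothesis that $M$ ends in $6$ plays no role in the literal existence claim, which your one-line argument settles more simply (and frankly more rigorously, since the paper's derivation of (\ref{M7})--(\ref{M16}) has substantial gaps); what your proof deliberately forgoes is the link between $(a,b)$ and a Goldbach-type decomposition of $M$, which is the content the paper actually wants from this theorem. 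So: correct proof of the stated theorem, by a much more elementary argument, but one that buys none of the structure the paper's corollaries depend on, whereas the paper's longer construction is aimed precisely at that structure at the cost of rigor.
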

\begin{proof}
Let
\begin{eqnarray} \label{M1}
M=(K-\alpha) + (K+\alpha)=(10k_{1}+3)+(10k_{2}+3)
\end{eqnarray}
where $k_{2}=\displaystyle\frac{K-3+\alpha}{10}$, $k_{1}=\displaystyle\frac{K-3-\alpha}{10}$, $\alpha$, $k_{1}$, $k_{2}$ are natural numbers.

Let us define the next linear functional
\begin{eqnarray}\label{M2}
F(x,y)=(10k_{1}+3)x+(10k_{2}+3)y. 
\end{eqnarray}
So, $\ker F= \{(-(10k_{2}+3),10k_{1}+3 )\}$, $\{\ker F\}^{\bot}= \{(10k_{1}+3 ,10k_{2}+3 )\}$.
We have
\begin{eqnarray}\label{M3}
F(1,1)=M.
\end{eqnarray}
Moreover
\begin{eqnarray}\label{M4}
(1,1)=\lambda_{1} (-(10k_{2}+3),(10k_{1}+3))+\lambda_{2}((10k_{1}+3), 10k_{2}+3).
\end{eqnarray} 
From (\ref{M2}) and  (\ref{M4}) we have
\begin{eqnarray} \label{M5}
M=\lambda_{2}[(10k_{1}+3)^{2}+(10k_{2}+3)^{2}].
\end{eqnarray}
From (\ref{M5}) and (\ref{M1}) it follows
\begin{eqnarray*} \label{M6}
M=\lambda_{2} \left[(K-\alpha)^{2}+(K+\alpha)^{2} \right]=\lambda_{2}[2K^{2}+2\alpha^{2}]
\end{eqnarray*}
from (\ref{M5}) it follows that $\lambda_{2}\in \mathbb{Q}$. Let $\lambda_{2}=\frac{m}{n}$, where $m$ and $n$ are relatively primes, $m<n$.
From this last relation one has 
\begin{eqnarray}\label{M7}
\vert Mm-n\vert=\sqrt{n^{2}-4\alpha^{2}m^{2}}.
\end{eqnarray}
Using the Fermat's Theorem for the power $2$ one has $n=a^{2}+b^{2}$, $w=a^{2}-b^{2}$, $a>b$, $\alpha m=ab$; then, using these ralationships into (\ref{M7}) one has
\begin{eqnarray}\label{M8}
\alpha=\frac{Mb}{2a}\,\, \mbox{o} \,\, \alpha =\frac{Ma}{2b}
\end{eqnarray}
 From  (\ref{M2}) one has
\begin{eqnarray} \label{M9}
F(1,0)=10 k_{1}+3
\end{eqnarray}
From (\ref{M9}) we see that $10k_{1}+3 $ is a number ending in 3. Let us assume that this number in not prime. Then 
\begin{eqnarray}\label{asterisco1}
10k_{1}+3=(10A+7)(10B+9)\,\, \mbox{or}\,\,10k_{1}+3=(10A+3)(10B+1)
\end{eqnarray}
where $(A,B)\in \mathbb{N}\times \mathbb{N}$.

So, one has 
\begin{eqnarray}\label{M10}
(1,0)=r_{1}(-(10k_{2}+3), (10k_{1}+3))+r_{2}(10k_{1}+3, 10k_{2}+3),
\end{eqnarray}
from (\ref{M9}) y (\ref{M10})
\begin{eqnarray}\label{M11}
10k_{1}+3=r_{2}((10k_{1}+3)^{2}+(10k_{2}+3)^{2}).
\end{eqnarray}
From the relation (\ref{M2}) one has
\begin{eqnarray} \label{M12}
F(0,1)=10k_{2}+3
\end{eqnarray}
From (\ref{M12}) one has $10k_{2}+3$ is a number ending in 3; let us assume that this in not a prime number. Then 
\begin{eqnarray}\label{asterisco2}
10k_{2}+3=(10C+3)(10D+1)\,\,\mbox{or}\,\, 10k_{2}+3=(10C+7)(10D+9)
\end{eqnarray}
where $(C,D)\in \mathbb{N}\times \mathbb{N}.$

From the relation (\ref{M12}) and the relationship 

$(0,1)=t_{1}(-(10k_{2}+3), (10k_{1}+3))+t_{2}(10k_{1}+3,10k_{2}+3)$, one gets
\begin{eqnarray}\label{M13}
10k_{2}+3=t_{2}[(10k_{1}+3)^{2}+(10k_{2}+3)^{2}].
\end{eqnarray}
Since (\ref{M1}) $10k_{1}+3=K-\alpha$ y $10k_{2}+3=K+\alpha$, from  (\ref{M11}) and  (\ref{M12}) one has
\begin{eqnarray}\label{M14}
M&=&(r_{2}+t_{2})[(10k_{1}+3)^{2}+(10k_{2}+3)^{2}]=(r_{2}+t_{2})[\frac{M^{2}}{2}+2\alpha^{2}] \\ 
2\alpha &=& (t_{2}-r_{2})[(10k_{1}+3)^{2}+(10k_{2}+3)^{2}]=(t_{2}-r_{2})[\frac{M^{2}}{2}+2\alpha^{2}]. \nonumber
\end{eqnarray}
From (\ref{M14}) and  (\ref{M8}) one can get the following
\begin{eqnarray}\label{M15}
t_{2}=\frac{a(a+b)}{M(a^{2}+b^{2})}\,\, \mbox{and} \,\, r_{2}=\frac{a(a-b)}{M(a^{2}+b^{2})}, a>b.
\end{eqnarray}
From the relations (\ref{M11}), (\ref{M15}) and (\ref{M13}) one gets
\begin{eqnarray}\label{M16}
10k_{1}+3=\frac{(a-b)M}{2a}\,\, \mbox{and} \,\, 10k_{2}+3=\frac{(a+b)M}{2a}, \,\, a>b.
\end{eqnarray}
where $k_{1}=\frac{M(a-b)-6a}{20a}$ and  $k_{2}=\frac{M(a+b)-6a}{20a}$.

The other possibility to write  $M$ is given  by the next equation
\begin{eqnarray}\label{M17}
M=(K-\alpha)+(K+\alpha)=(10k_{1}+9)+(10k_{2}+7)=(10k_{1}+9)+(10k_{2}+7)
\end{eqnarray}

The process is similar to  (\ref{M1}); so,  $k_{1}$, $k_{2}$ are given by

$k_{1}=\displaystyle\frac{K-9-\alpha}{10}$, $k_{2}=\displaystyle\frac{K+\alpha-7}{10}$ o $k_{2}=\displaystyle\frac{K-7-\alpha}{10}$, $k_{1}=\displaystyle\frac{K+\alpha-9}{10}$ donde $\alpha =\displaystyle\frac{Mb}{2a}$, $2K=M$.

Finally,  the last possibility to write $M $ is in the following form 
\begin{eqnarray}\label{M18}
M=5+(10k_{1}+1)=(K-\alpha)+(K+\alpha)
\end{eqnarray}
donde $\alpha=K-5$ y $k_{1}=\frac{K-3}{5}$.
\end{proof}

%%%%%%%%%%%%%%%%%%%%%%%%%%%%%%%%%%%%%%%%%%%%%%%%%%%%%%%%%%%%%%%%%%%%%%%%%%%%%%%%%%%%%%%%%%%%%%%%%%%%%%%%%%%%%%%%%%%%%%%%
\begin{corollary}
 $(10k_{1}+3)$ and $(10k_{2}+3)$  are primes if and only if $(a-b)$ and $(a+b)$ are prime numbers. This happens when $2a=M$.
\end{corollary}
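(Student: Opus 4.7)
The plan is to read off the corollary directly from the explicit formulas produced at the end of the proof of Theorem \ref{th1}. Recall that equations (\ref{M16}) give
\[
10k_{1}+3=\frac{(a-b)M}{2a},\qquad 10k_{2}+3=\frac{(a+b)M}{2a},
\]
together with $\gcd(a,b)=1$, $a>b$, and $a\mid M$. Writing $M=aq$ with $q\in\mathbb{N}$, these become $10k_{1}+3=(a-b)q/2$ and $10k_{2}+3=(a+b)q/2$, which immediately collapse to
\[
10k_{1}+3=a-b,\qquad 10k_{2}+3=a+b
\]
in the case $2a=M$ (i.e.\ $q=2$). So, assuming we are in the regime $2a=M$, the asserted equivalence becomes a tautology: the two summands $10k_{1}+3$ and $10k_{2}+3$ coincide with $a-b$ and $a+b$, and primality of one pair is primality of the other.

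I would first verify this $2a=M$ normalization carefully and record the compatibility with (\ref{M1}): in that case $K=a$, $\alpha=b$, which is indeed consistent with (\ref{M8}) since $\alpha=Mb/(2a)=b$. After this, the ``if'' direction is literally one line: assume $a-b$ and $a+b$ are prime, substitute into the two displayed formulas, and conclude $10k_{1}+3$ and $10k_{2}+3$ are prime.

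For the ``only if'' direction I would argue by contraposition, using the factorization alternatives (\ref{asterisco1}) and (\ref{asterisco2}) already written out in the proof of Theorem \ref{th1}. If $a-b$ or $a+b$ were composite, then because $10k_{1}+3=a-b$ and $10k_{2}+3=a+b$ under $2a=M$, the number $10k_{1}+3$ (resp.\ $10k_{2}+3$) would factor as $(10A+7)(10B+9)$ or $(10A+3)(10B+1)$ (resp.\ $(10C+3)(10D+1)$ or $(10C+7)(10D+9)$), contradicting its primality. This closes the equivalence.

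The one step I expect to be slightly delicate, and which deserves explicit mention, is the justification that the relevant regime really is $2a=M$ (i.e.\ $q=2$), as asserted in the second sentence of the corollary. The natural argument is: if $q\neq 2$, then $10k_{1}+3=(a-b)q/2$ splits nontrivially as a product of two integers greater than $1$ (using $a>b$, $a-b\ge 1$, and $q\ge 3$, with a short parity check when $q$ is odd to ensure $(a-b)/2$ is an integer), which contradicts primality of $10k_{1}+3$. So primality of the two summands forces $2a=M$, after which the equivalence with primality of $a\pm b$ follows from the previous two paragraphs. This parity-and-divisibility bookkeeping, rather than the biconditional itself, is where I expect the only real work to lie.
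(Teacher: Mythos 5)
Your reading of the statement and your main argument coincide with the paper's own (one-line) proof, which simply says the result follows from (\ref{M16}): under $2a=M$ the formulas collapse to $10k_{1}+3=a-b$ and $10k_{2}+3=a+b$, and the biconditional is then immediate. So for the equivalence itself you are on the same route as the paper, only more explicit; the detour through (\ref{asterisco1}) and (\ref{asterisco2}) for the ``only if'' direction is unnecessary once the identification $10k_{i}+3=a\mp b$ is in hand, since the two pairs of numbers are literally equal.

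The place where you go beyond the paper --- showing that primality of the two summands \emph{forces} $2a=M$ --- is also where your argument has a genuine hole as written. With $M=aq$ and $q$ even, $q\geq 4$, the quantity $10k_{1}+3=(a-b)q/2$ does \emph{not} split nontrivially when $a-b=1$: it is just $q/2$, which may perfectly well be prime, so no contradiction arises from this equation alone (and similarly $(a-b)/2$ may equal $1$ in the odd-$q$ case). The contradiction must instead be drawn from the second formula, $10k_{2}+3=(a+b)q/2$, where $a>b\geq 1$ gives $a+b\geq 3$ and the cofactor ($q/2$ for $q$ even $\geq 4$, or $q\geq 3$ with $(a+b)/2\geq 2$ for $q$ odd after your parity check) is at least $2$, so $10k_{2}+3$ is composite. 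In addition, ``$q\neq 2$ hence $q\geq 3$'' silently discards $q=1$, which needs a separate word: integrality of $(a\pm b)/2$ with $\gcd(a,b)=1$ forces $a,b$ both odd, hence $M=a$ odd, contradicting that $M$ is even. With these repairs your forcing step is sound; bear in mind, though, that the paper itself offers no argument for the clause ``This happens when $2a=M$,'' so this portion is your addition rather than a reconstruction of the paper's proof.
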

\begin{proof} The proof follows by using (\ref{M16}).
\end{proof}
%%%%%%%%%%%%%%%%%%%%%%%%%%%%%%%%%%%%%%%%%%%%%%%%%%%%%%%%%%%%%%%%%%%%%%%%%%%%%%%%%%%%%%%
\begin{corollary}
 The relationships between  $\alpha, m, n$ and  $K$ are given by $m=K$ and  $\alpha =\sqrt{n-K^{2}}$, where $K^{2}\leq n \leq 2K^{2}$.
\end{corollary}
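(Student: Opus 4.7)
The plan is to extract the desired identities directly from the equations already set up inside the proof of Theorem~\ref{th1}. Starting from (\ref{M5}), substitute the decomposition (\ref{M1}), i.e.\ $10k_{1}+3=K-\alpha$ and $10k_{2}+3=K+\alpha$, to obtain
\begin{equation*}
M=\lambda_{2}\bigl[(K-\alpha)^{2}+(K+\alpha)^{2}\bigr]=2\lambda_{2}\bigl(K^{2}+\alpha^{2}\bigr).
\end{equation*}
Since $2K=M$, this collapses to $K=\lambda_{2}(K^{2}+\alpha^{2})$, so $\lambda_{2}=K/(K^{2}+\alpha^{2})$.

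Next I would insert $\lambda_{2}=m/n$ with $\gcd(m,n)=1$ and $m<n$, giving the integer identity $K\,n = m\,(K^{2}+\alpha^{2})$. Since $\gcd(m,n)=1$ implies $m\mid K$, write $K=jm$; substitution produces $jn=K^{2}+\alpha^{2}$, and the coprimality/minimality of $(m,n)$ forces $j=1$. Therefore $m=K$ and $n=K^{2}+\alpha^{2}$, whence $\alpha^{2}=n-K^{2}$ and so $\alpha=\sqrt{n-K^{2}}$, as claimed. The bounds then fall out at once: $\alpha\in\mathbb{N}$ with $\alpha\ge 0$ gives $n\ge K^{2}$, while the positivity of $10k_{1}+3=K-\alpha$ forces $\alpha\le K$, so $n=K^{2}+\alpha^{2}\le 2K^{2}$.

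The main obstacle I foresee is the step $j=1$, i.e.\ showing that the fraction $K/(K^{2}+\alpha^{2})$ is already in lowest terms — equivalently $\gcd(K,\alpha^{2})=1$. To discharge it I would appeal to (\ref{M8}), where $\alpha=Mb/(2a)=Kb/a$ with $\gcd(a,b)=1$ and $a\mid M$; this expresses $\alpha$ in terms that are coprime to $K$ in the relevant cases, so no nontrivial common factor can be extracted from numerator and denominator of $\lambda_{2}$. Once that is secured, the identification $m=K$, $n=K^{2}+\alpha^{2}$ is forced and the corollary follows, with the inequalities $K^{2}\le n\le 2K^{2}$ delineating exactly the admissible range of $\alpha$ between $0$ and $K$.
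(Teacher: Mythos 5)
Your derivation is essentially the paper's own: the printed proof of this corollary is the single instruction to use (\ref{M6}), i.e.\ $M=\lambda_{2}\left[2K^{2}+2\alpha^{2}\right]$ combined with $M=2K$ and $\lambda_{2}=m/n$, which is exactly the computation you perform from (\ref{M5}) and (\ref{M1}); your remark that $0\leq\alpha\leq K$ yields $K^{2}\leq n\leq 2K^{2}$ is the intended reading of the stated bounds. Where you diverge is in flagging that the identification $m=K$, $n=K^{2}+\alpha^{2}$ needs $K/(K^{2}+\alpha^{2})$ to be in lowest terms, i.e.\ $\gcd(K,\alpha^{2})=1$ --- a point the paper does not argue at all (it only adds the parity observations that $K$ and $\alpha$ have opposite parity, hence $n$ is odd). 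Be aware, though, that your appeal to (\ref{M8}) does not actually close this: from $\alpha=Kb/a$ with $a\mid M$, $a>b$, $\gcd(a,b)=1$, coprimality of $K$ and $\alpha$ only follows when $a$ does not properly divide $K$; if $K=ac$ with $c>1$, then $\alpha=cb$ and $c$ divides $\gcd(K,\alpha)$ (for instance $M=36$, $a=6$, $b=5$ gives $K=18$, $\alpha=15$, $K-\alpha=3$, $K+\alpha=33$, and $\gcd(K,\alpha)=3$, so $\lambda_{2}=18/549=2/61$ and the reduced numerator is no longer $K$). So the lowest-terms step is a genuine assumption rather than a consequence of (\ref{M8}); since the paper itself simply asserts the identification, your argument is at least as complete as the printed one, but you should present $\gcd(K,\alpha)=1$ as a hypothesis (or restrict to the cases where it holds) rather than as something delivered by (\ref{M8}).
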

\begin{proof} Use the relation (\ref{M6}). For $K$ even, $\alpha$ is odd and  $n$ is odd. For $K$ odd, $\alpha$ is even  and $n$ is odd.
\end{proof}
%%%%%%%%%%%%%%%%%%%%%%%%%%%%%%%%%%%%%%%%%%%%%%%%%%%%%%%%%%%%%%%%%%%%%%%
%%%%%%%%%%%%%%%%%%%%%%%%%%%%%%%%%%%%%%%%%%%%%%%%%%%%%%%%%%%%%%%%%%%%%%%
\begin{remark}\label{r1}
The primality of a natural number is subject to the integer solution of the equation $(x-n)^{2}+(y-n)^{2}=2n^{2}$, where $n\in \mathbb{N}$.
\end{remark}
%%%%%%%%%%%%%%%%%%%%%%%%%%%%%%%%%%%%%%%%%%%%%%%%%%%%%%%%%%%%%%%%%%%%%%%%%%%%
\begin{corollary}
Let $M=2K$, where  $M$ is a natural number ending in 6. For each $K$ being a natural number ending in 8 we hava that if  $K+\alpha$ and $K-\alpha$ are prime numbers then $\alpha \in \{1, 11, 21, ..., K-7\}$  or  $\{5, 15, 25,..., K-3\}$. If $K$ ends in 3 then $\alpha \in \{0,10,20,...,K-3\}$ or $\alpha \in \{ 6, 16, 26,..., K-7\}$.
\end{corollary}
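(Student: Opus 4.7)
The plan is to reduce the claim to a residue computation modulo $10$, piggybacking on the enumeration of admissible decompositions in Theorem~\ref{th1}. A prime larger than $5$ has last digit in $\{1,3,7,9\}$, and since $(K-\alpha)+(K+\alpha)=M$ ends in $6$, the admissible pairings of last digits of $(K-\alpha,K+\alpha)$ are precisely those singled out in Theorem~\ref{th1}: both summands ending in $3$ (decomposition (\ref{M1})), and one summand ending in $9$ with the other ending in $7$ (decomposition (\ref{M17})). The degenerate decomposition (\ref{M18}) involving $5$ is disposed of at the end.

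First I would fix $K\equiv 8\pmod{10}$. The $(3,3)$ pattern forces $K-\alpha\equiv 3\pmod{10}$, hence $\alpha\equiv 5\pmod{10}$; the constraint $K-\alpha\ge 3$ bounds $\alpha\le K-3$, giving $\alpha\in\{5,15,25,\ldots,K-3\}$. The $(7,9)$-type pattern, taking $K-\alpha\equiv 7\pmod{10}$, forces $\alpha\equiv 1\pmod{10}$ with $\alpha\le K-7$, giving $\alpha\in\{1,11,21,\ldots,K-7\}$.

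The case $K\equiv 3\pmod{10}$ is treated identically. Now the $(3,3)$ pattern yields $\alpha\equiv 0\pmod{10}$ with $\alpha\le K-3$, hence $\{0,10,20,\ldots,K-3\}$; the $(7,9)$-type pattern yields $\alpha\equiv 6\pmod{10}$ with $\alpha\le K-7$, hence $\{6,16,26,\ldots,K-7\}$. A brief final step addresses (\ref{M18}), $M=5+(10k_1+1)$: when both summands are prime, the corresponding $\alpha$ either coincides with a value already enumerated, or the complementary summand $10k_1+1$ fails to be prime, so the listing is not enlarged.

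The main obstacle is really bookkeeping: confirming that the upper endpoints $K-3$ and $K-7$ indeed lie in the claimed residue classes (immediate from the hypotheses on $K\bmod 10$) and verifying that no prime pair can fall outside the two admissible pairings $(3,3)$ and $(7,9)$. The substantive restriction has already been performed in Theorem~\ref{th1}, so the corollary is essentially a translation of that theorem into arithmetic progressions for $\alpha$.
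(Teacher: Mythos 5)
Your route --- translating the admissible last-digit patterns of Theorem \ref{th1} into residue classes for $\alpha$ modulo $10$ --- is in substance the paper's own proof, which encodes $K+\alpha\equiv 3\pmod{10}$ and $K+\alpha\equiv 9\pmod{10}$ through the conditions $\frac{(K+\alpha-3)^{2}}{50}\in\mathbb{N}$ and $\frac{(K+\alpha-9)^{2}}{50}\in\mathbb{N}$ drawn from (\ref{M16}) and (\ref{M17}). But there is a genuine gap at the decisive step: in the $(7,9)$ pattern you write ``taking $K-\alpha\equiv 7\pmod{10}$'' and never justify discarding the opposite ordering $K-\alpha\equiv 9$, $K+\alpha\equiv 7\pmod{10}$, which (\ref{M17}) explicitly allows (the theorem lists both $k_{1}=\frac{K-9-\alpha}{10}$ and $k_{1}=\frac{K+\alpha-9}{10}$). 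For $K\equiv 8\pmod{10}$ that ordering yields $\alpha\equiv 9\pmod{10}$, and for $K\equiv 3\pmod{10}$ it yields $\alpha\equiv 4\pmod{10}$, residue classes absent from the corollary's lists. This is not removable bookkeeping: $M=56$, $K=28$, $\alpha=9$ gives the prime pair $19+37$, and $M=66$, $K=33$, $\alpha=4$ gives $29+37$, so no argument can legitimately exclude that ordering, and the enumeration you aim for cannot be completed as stated. (The paper's proof has the same omission, since it only retains the sub-case in which $K+\alpha$ ends in $9$; your proposal inherits rather than repairs it.)

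Separately, your disposal of the degenerate decomposition (\ref{M18}) is an assertion, not an argument: you claim that when one summand equals $5$ the resulting $\alpha$ is already enumerated or the complementary summand $10k_{1}+1$ is composite. That claim is false: $M=36$, $K=18$, $\alpha=13$ gives $5+31$ with both summands prime, and $13$ lies in neither $\{1,11\}$ nor $\{5,15\}$. So this final step needs a real proof, and in fact none can exist; if you want a salvageable statement you must either enlarge the admissible sets for $\alpha$ (adding the classes $\alpha\equiv 9$, respectively $\alpha\equiv 4\pmod{10}$, and the exceptional $\alpha=K-5$) or add hypotheses excluding the summand $5$ and fixing which of $K\pm\alpha$ carries the terminal digit $9$.
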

\begin{proof}
Simply, we use the equations (\ref{M16}) y (\ref{M17}) to obtain $\frac{(K+\alpha-9)^2}{50}\in \mathbb{N}$ or $\frac{(K+\alpha -3)^{2}}{50}\in \mathbb{N}$, $0\leq \alpha \leq K$, and from this the result follows.
\end{proof}
\begin{remark}
Let  $K+\alpha_{0}$ and $K-\alpha_{0}$ be prime numbers satisfying the condition $M=2K$, $K$ ending in  $8$ and  $\alpha_{0}\in \{1, 11,..., K-7\}$.

For $\widetilde{M}=M+10=2\widetilde{K}$ imply $\widetilde{K}=K+5$, $\widetilde{\alpha}_{0}=\alpha_{0}-1$ y $(\widetilde{K}+\widetilde{\alpha}_{0} )+ (\widetilde{K}-\widetilde{\alpha}_{0} )=2\widetilde{K}=\widetilde{M}$, where $\dfrac{(K+\alpha_{0}+1)^{2}}{50}\in \mathbb{N}$.
\end{remark}

If $K$ ends in 8 and  $\alpha_{0}\in \{5,15,..., K-3\}$ one can have $\widetilde{K}=K+5$, $\widetilde{\alpha}_{0}=\alpha_{0}+1 $ y $(\widetilde{K}+\widetilde{\alpha}_{0} )+ (\widetilde{K}-\widetilde{\alpha}_{0} )=2\widetilde{K}=\widetilde{M}$, where $\dfrac{(K+\alpha_{0}-3)^{2}}{50}\in \mathbb{N}$.
A similar result can be obtained if $K$ ends in $3$.

\begin{remark}
If $K-\alpha$ and $K+\alpha$ end in 3 where  $M=2K$, then $K-\alpha$ and $K+\alpha$ possess the represntations given in  (\ref{asterisco1}) and  (\ref{asterisco2}), and if they possess integer solutions using the Theorem  \ref{th1} given in  \cite{LS} we have

$$
\frac{\sqrt{10 K_{1}+3-2 \sqrt{10 K_{1}+3}}-7}{5} \leq A+B \leq 2\frac{(\sqrt{10 K_{1}-31}-7)}{5}
$$ or 
$$\dfrac{\left( \sqrt{\dfrac{110 K_{1}+33}{13}}-1\right)}{5} \leqslant A+B \leq \dfrac{2\left(\sqrt{10 K_{1}+1}-1\right)}{5}$$
This is for the first representation of $K-\alpha$ given in  (\ref{asterisco1}).
\end{remark}
We can obtain similar results if $K+\alpha$ has the  possible representation given in (\ref{asterisco2}).

%Additionally, one gets similar results if $K-\alpha$ is represented in the two possible forms given in  (\ref{asterisco1}).
%%%%%%%%%%%%%%%%%%%%%%%%%%%%%%%%%%%%%%%%%%%%%%%%%%%%%%%%%%%%%%%%%%%%%%%%%%%
\subsection{Some applications}
\begin{example} \label{EJEMPLO1} \cite{TP}
Let us consider the Cauchy problem in the interval $[0,\ell]$ 
\begin{eqnarray*}
\frac{d^{2}x}{dt^{2}}+b(t)\frac{dx}{dt}+c(t)x(t)=y(t), \,\, x(0)=\alpha, x'(0)=\beta.
\end{eqnarray*}
It is convenient to write it in the form of the system $\frac{dx}{dt}-u(t)=0,$\newline
\begin{eqnarray}\label{ej1}
\frac{du}{dt}+b(t)u(t)+c(t)x(t)=y(t), \,\, x(0)=\alpha, \,\, u(0)=\beta.
\end{eqnarray}
In order to solve (\ref{ej1}) let us apply the  Theorem (\ref{th1}).

Let $F(x,y)=u'(t)x+b(t)u(t)y$, and assume that $\vert u'(t)\vert \leq M_{1}$,  $\vert b(t)\vert \leq M_{2}$, $\vert u(t)\vert \leq M_{3} $  $\forall t\in [0,\ell]$. Then the operator $F$ is bounded. Then  $\ker F$ is closed. (Even though this is not necessary since every linear operator $F: \mathbb{R}^{n}\longrightarrow \mathbb{R}^{m}$ is bounded).

So, one has $\ker F=\{(-b(t)u(t), u'(t))\}$, $\{\ker F\}^{\perp}=\{(u'(t),-b(t)u(t))\}$
\begin{eqnarray}\label{ej2}
F(1,1)=u'(t)+b(t)u(t)=f(t), \,\, \mbox{where} \,\, f(t)=y(t)-c(t)x(t)
\end{eqnarray}
%%%%%%%%%%%%%%CORRECCION%%%%%%%%%%%%%%%%%%%%%%%%%%%%%%%%%%%%%%%%%%%%%%%%%%%%%%%
Since
\begin{eqnarray} \label{ej1_ec3}
(1,1)=\lambda_{1}(-b(t)u(t),u'(t))+\lambda_{2}(u'(t),b(t)u(t)),
\end{eqnarray}
then 
\begin{eqnarray} \label{ej1_ec4}
f(t)&=&\lambda_{2}(u'^{2}(t)+b^{2}(t)u^{2}(t)).
\end{eqnarray}
A parametrization satisfying (\ref{ej1_ec4}) is
\begin{eqnarray}
u'(t)&=&\sqrt{\dfrac{f(t)}{\lambda_{2}(t)}}\sin (\frac{\pi t}{\ell})   \label{ej1_ec5}\\
b(t)u(t)&=&\sqrt{\dfrac{f(t)}{\lambda_{2}(t)}}\cos (\frac{\pi t}{\ell}) \label{ej1_ec6}.
\end{eqnarray}
From the relations  (\ref{ej1_ec5}) and  (\ref{ej1_ec6}) one gets
\begin{eqnarray}\label{ej1_ec7}
\lambda_{2}(t)=\dfrac{1+\sin 2(\frac{\pi t}{\ell})}{f(t)}.
\end{eqnarray}
From the relations (\ref{ej1_ec3}) one gets
\begin{eqnarray}\label{ej1_ec8}
\lambda_{2}(t)=\dfrac{f(t)}{u'^{2}(t)+u^{2}(t)b^{2}(t)}
\end{eqnarray}

From (\ref{ej1_ec7}) and (\ref{ej1_ec8}) one has
\begin{eqnarray}\label{ej1_ec9}
[u'^{2}(t)+ u^{2}(t)b^{2}(t)](1+\sin 2(\frac{\pi t}{\ell}) )=f^{2}(t)
\end{eqnarray}
From (\ref{ej1_ec5}) and  (\ref{ej1_ec6}) one has
\begin{eqnarray}\label{ej1_ec10}
\dfrac{u'(t)}{u(t)}=b(t)\tan (\frac{\pi t}{\ell})
\end{eqnarray}
From (\ref{ej1_ec9}) and (\ref{ej1_ec10})  one has 
\begin{eqnarray}\label{ej1_ec11}
f(t)=u(t)b(t)(1+\tan (\frac{\pi t}{\ell}) ).
\end{eqnarray}
Therefore from  (\ref{ej1_ec10}) one gets that $x(t)=\alpha +\beta \displaystyle\int_{0}^{t} e^{G(t)}d\tau$, where $G(t)=\int _{0}^{\tau}b(t)\tan (\frac{\pi t}{\ell}) dt$, and in addition from  (\ref{ej1_ec11}) one has that 
$$y(t)-\beta e^{G(t)}(1+\tan (\frac{\pi t}{\ell}))b(t)=\left( \alpha +\beta \int_{0}^{t}e^{G(t)}d\tau \right)c(t) $$.
\end{example}
%%%%%%%%%%%%%%end CORRECCION%%%%%%%%%%%%%%%%%%%%%%%%%%%%%%%%%%%%%%%%%%%%%%%%%%%%%%
%%%%%%%%%%%%%%%%%%%%%%%%%%%%%%%%%%%%%%%%%%%%%%%%%%%%%%%%%%%%%%%%%%%%%%%%%%%%%%%%%%%%%%%%%%%%%%%%%%%%%%%
\begin{example} \label{EJEMPLO2} \cite{CHA}
Let  $D$ be a open region and bounded in the plane $XY$, $\partial D$ smooth boundary of $D$. Given a real function  $f\in \mathcal{L}^{2}$, we would like to find a function $u$ such that 
\begin{eqnarray*}
-\Delta u&=&f\,\, \mbox{in} \,\, D \\
u&=&0\,\, \mbox{en} \,\, D
\end{eqnarray*} 
\textit{\textbf{Solution.}} \newline
Let  $T(z,w)=\left(\displaystyle \frac{d^{2}u }{dx^{2}} \right)z +\left(\displaystyle\frac{d^{2} u}{d y^{2}}\right)w$. 
Clearly $T$ is continuous and  $\ker T=\{ \left( -\displaystyle\frac{d^{2} u}{d y^{2}}, \displaystyle\frac{d^{2} u}{d x^{2}} \right) \}$ , $ \{ \ker T\}^{\perp}=\{ \left( \displaystyle\frac{d^{2} u}{d x^{2}}, \displaystyle\frac{d^{2} u}{d y^{2}} \right) \}$.
Therefore one has 
\begin{eqnarray}\label{ej2-1}
T(1,1)= \frac{d^{2} u}{d x^{2}}+\frac{d^{2} u}{d y^{2}}=-f
\end{eqnarray}
Moreover 
\begin{eqnarray}\label{ej2-2}
(1,1)=\lambda_{1}\left(-\frac{d^{2} u}{d y^{2}}, \frac{d^{2} u}{d x^{2}}\right)+\lambda_{2}\left(\frac{d^{2} u}{d x^{2}}, \frac{d^{2} u}{d y^{2}}\right)
\end{eqnarray}
from (\ref{ej2-2}) it is clear that  $\lambda_{1}=\lambda_{1}(x,y)$, $\lambda_{2}=\lambda_{2}(x,y)$. From the relations  (\ref{ej2-1}) and (\ref{ej2-2}) one has
 \begin{eqnarray}
 -f&=&\lambda_{2} \left[\left(\frac{d^{2} u}{d x^{2}}\right)^{2}+\left(\frac{d^{2} u}{d y^{2}}\right)^{2}  \right] \nonumber \\
 -f&=& \lambda_{2} \left[ f^{2}-2 \frac{d^{2} u}{d x^{2}} \left(-f-\frac{d^{2} u}{d x^{2}}  \right) \right]  \label{ej2-3}
 \end{eqnarray}
from (\ref{ej2-3}) one has
\begin{eqnarray}\label{ej2-4}
\frac{d^{2} u}{d x^{2}}&=& -\frac{f}{2} +\frac{1}{2} \sqrt{\frac{2 f}{\tau}-f^{2}}  \nonumber \\
\frac{d^{2} u}{d y^{2}}&=& -\frac{f}{2}-\frac{1}{2} \sqrt{\frac{2 f}{\tau}-t^{2}}
\end{eqnarray}
where $\tau (x,y)=-\lambda_{2}(x,y)$.
If $u$ is of the class $\mathcal{C}^{2}$, then from (\ref{ej2-4}) we have 
\begin{eqnarray}\label{ej2-5}
\iint_{D}\left(\frac{d^{2} u}{d x^{2}}-\frac{d^{2} u}{d y^{2}}\right) d x d y =\iint_{D} \sqrt{\frac{2 f}{\tau}-f^{2}} d x d y
\end{eqnarray}
Due to the Green's theorem one has that (\ref{ej2-5}) imply  $$\displaystyle\iint_{D} \sqrt{\frac{2 f}{\tau}-f^{2}} d x d y=0.$$
Then from (\ref{ej2-4}) one has
$$\frac{d^{2} u}{d x^{2}}=\frac{d^{2} u}{d y^{2}}=-\frac{f}{2}.$$
\end{example}
%%%%%%%%%%%%%%%%%%%%%%%%%%%%%%%%%%%%%%%%%%%%%%%%%%%%%%%%%%%%%%%%%%%%%%%%%%%%%%%%%%%%%%%%%%%%%%%%%%%%%%%
\begin{example} \cite{S}
Let us assume that there exist $A, B, C \in \mathbb{N}$ such that $A^{n}+B^{n}=C^{n}$, for some $n\in \mathbb{N} \wedge n\geq 3$, and $A,B,C$ are relatively prime numbers when taken two by two.
Define$F(x,y)=A^{n}x+B^{n}y$. It is clear that $\ker F= \left\{ \left(-B^{n}, A^{n} \right) \right\}$, $(\ker F )^{\perp}= \left\{ \left(A^{n}, B^{n} \right) \right\}.$
Thus, one has 
\begin{eqnarray}\label{ej3-1}
F(1,1)=A^{n}+B^{n}=C^{n}.
\end{eqnarray}
In addition one has
\begin{eqnarray}\label{ej3-2}
(1,1)=\lambda_{1}\left(-B^{n},A^{n} \right)+ \lambda_{2}\left(A^{n}, B^{n}\right) 
\end{eqnarray}
from (\ref{ej3-1}) and  (\ref{ej3-2})  one has
\begin{eqnarray}\label{ej3-3}
C^{n}=\lambda_{2}\left(A^{2n}+B^{2n}\right)=\lambda_{2}\left[C^{2 n}-2 A^{n} B^{n}\right] \nonumber\\
C^{n}=\lambda_{2} \left[C^{2n}-2A^{n}C^{n}+2A^{2n} \right] 
\end{eqnarray}
From the relation (\ref{ej3-3}) it is clear that  $\lambda_{2}=\frac{C^{n}}{m}$, where $C^{n}$ and $m$ are relatively prime integers and in addition  one gets the next relations
\begin{eqnarray}\label{ej3-4}
A^{n}=\frac{C^{n}}{2}+\sqrt{\frac{C^{n}}{2\lambda_{2}}- \frac{C^{2n}}{4}} \nonumber \\
B^{n}=\frac{C^{n}}{2}-\sqrt{\frac{C^{n}}{2\lambda_{2}}- \frac{C^{2n}}{4}} 
\end{eqnarray}
From (\ref{ej3-4}) one gets
\begin{eqnarray}\label{ej3-5}
A^{n}-B^{n}=\sqrt{\frac{2C^{n}}{\lambda_{2}}-C^{2n}}.
\end{eqnarray}
From (\ref{ej3-5}) one has
$$\frac{1}{\lambda_{2}^{2}}- \left(C^{n}-\frac{1}{\lambda_{2}}\right)^{2}=T^{2}, \,\, T\in \mathbb{N}.$$
Using  $\lambda_{2}=\frac{C^{n}}{m}$, in the last relation one gets
\begin{eqnarray}\label{ej3-6}
\left(T C^{n}\right)^{2}+\left(C^{2 n}-m\right)^{2}=m^{2}
\end{eqnarray}
Using the Fermat's theorem for $n=2$, from the relation (\ref{ej3-6}) one gets
$$m=N^{2}+M^{2}, \quad C^{2n}-m=M^{2}-N^{2}, \quad TC^{n}=2MN$$ where $M$ and $N$ are relatively prime integers.
So, one has
\begin{eqnarray*}
C^{2n}- \left( N^{2}+M^{2}\right)&=&M^{2}-N^{2}\\
C^{2n}=2M^{2}
\end{eqnarray*}
which is a contradiction.
\end{example}
%%%%%%%%%%%%%%%%%%%%%%%%%%%%%%%%%%%%%%%%%%%%%%%%%%%%%%%%%%%%%%%%%%%%%%%%%%%%%%%%%%%%%%%%%%%%%%%%%%%%%%%%%
\begin{example}
Let us consider the problem of finding the natural frequency and vibration modes of an extended  membrane on a bounded region  $D$ in $\mathbb{R}^{2}$ and boundary  $\partial D$. 
The small amplitude periodic vibrations are described by the equation  
\begin{eqnarray*}
\Delta u +\lambda u &=& 0, \,\,\mbox{en} \,\, D, \,\, \lambda >0.\\
u &=& 0, \mbox{en} \,\, \partial D.
\end{eqnarray*}
Following the ideas of the example \ref{EJEMPLO2} one has that for $u$ in the class $\mathcal{C}^{2}$; 
\begin{eqnarray}\label{eje4-1}
\dfrac{d^{2}u}{dx^{2}}= \dfrac{d^{2}}{dy^{2}}=\frac{-\lambda u}{2}.
\end{eqnarray}
As it is known a solution of (\ref{eje4-1}) is given by 
\begin{eqnarray}\label{eje4-2}
u(x,y)= \left(A\cos \sqrt{\frac{\lambda}{2}}x+ B\sin \sqrt{\frac{\lambda}{2}}x \right)\left(\widetilde{A}\cos \sqrt{\frac{\lambda}{2}}y+ \widetilde{B}\sin \sqrt{\frac{\lambda}{2} }y \right).
\end{eqnarray}
If one considers $D=[-L,L]\times [-L,L]$, then from the relation (\ref{eje4-2}) one has that for  $f(x)=A\cos \sqrt{\lambda}x+ B\sin \sqrt{\lambda}x $
\begin{eqnarray}
f(L)&=& A\cos \sqrt{\dfrac{\lambda}{2}}L+ B\sin \sqrt{\dfrac{\lambda}{2}}L=0  \label{eje4-3}\\
f(-L)&=& A\cos \sqrt{\dfrac{\lambda}{2}}L - B\sin \sqrt{\dfrac{\lambda}{2}}L=0 \label{eje4-4}.
\end{eqnarray}
From (\ref{eje4-3}) and  (\ref{eje4-4}) one has that  $A=0$. Therefore $\sqrt{\dfrac{\lambda}{2}}= \dfrac{k\pi}{L}$, $k\in \mathbb{N}$.
Analogously in (\ref{eje4-2}) one has $\widetilde{A}=0$.
Thus, one has the solution 
\begin{eqnarray}\label{eje4-5}
u_{k}(x,y)=C_{k} \left( \sin \frac{k\pi x}{L} \right)\left( \sin \frac{k\pi y}{L} \right), \,\, k\in \mathbb{N}.
\end{eqnarray}
Due to the superposition principle one has
$$u(x,y)=\sum_{k=1}^{+\infty} C_{k}\left(\sin \frac{k\pi x}{L} \right) \left(\sin \frac{k\pi y}{L} \right).$$
\end{example}
%%%%%%%%%%%%%%%%%%%%%%%%%%%%%%%%%%%%%%%%%%%%%%%%%%%%%%%%%%%%%%%%%%%%%%%%%%%%%%%%%%%%%%%%%%%%%%%%%%%%%%%%%%%%%%%%%%%%%%%%
%%%%%%%%%%%%%%%%%%%%%%%%%%%%%%%%%%%%%%%%%%%%%%%%%%%%%%%%%%%%%%%%%%%%%%%%%%%%%%%%%%%%%%%%%%%%%%%%%%%%%%%%%%%%%%%%%%%%%%%%
%%%%%%%%%%%%%%%%%%%%%%%%%%%%%%%%%%%%%%%%%%%%%%%%%%%%%%%%%%%%%%%%%%%%%%%%%%%%%%%%%%%%%%%%%%%%%%%%%%%%%%%%%%%%%%%%%%%%%%%%
%%%%%%%%%%%%%%%%%%%%%%%%%%%%%%%%%%%%%%%%%%%%%%%%%%%%%%%%%%%%%%%%%%%%%%%%%%%%%%%%%%%%%%%%%%%%%%%%%%%%%%%%%%%%%%%%%%%%%%%%
%%%%%%%%%%%%%%%%%%%%%%%%%%%%%%%%%%%%%%%%%%%%%%%%%%%%%%%%%%%%%%%%%%%%%%%%%%%%%%%%%%%%%%%%%%%%%%%%%%%%%%%%%%%%%%%%%%%%%%%%
%%%%%%%%%%%%%%%%%%%%%%%%%%%%%%%%%%%%%%%%%%%%%%%%%%%%%%%%%%%%%%%%%%%%%%%%%%%%%%%%%%%%%%%%%%%%%%%%%%%%%%%%%%%%%%%%%%%%%%%%
%%%%%%%%%%%%%%%%%%%%%%%%%%%%%%%%%%%%%%%%%%%%%%%%%%%%%%%%%%%%%%%%%%%%%%%%%%%%%%%%%%%%%%%%%%%%%%%%%%%%%%%%%%%%%%%%%%%%%%%%
%\begin{proof}\end{proof} \begin{}\end{}
%%%%%%%%%%%%%%%%%%%%%%%%%%%%%%%%%%%%%%%%%%%%%%%%%%%%%%%%%%%%%%%%%%%%%%%%%%%%%%%%%%%%%%%%%%%%%%%%%%%%%%%%%%%%%%%%%%%%%%%%
%%%%%%%%%%%%%%%%%%%%%%%%%%%%%%%%%%%%%%%%%%%%%%%%%%%%%%%%%%%%%%%%%%%%%%%%%%%%%%%%%%%%%%%%%%%%%%%%%%%%%%%%%%%%%%%%%%%%%%%%
%%%%%%%%%%%%%%%%%%%%%%%%%%%%%%%%%%%%%%%%%%%%%%%%%%%%%%%%%%%%%%%%%%%%%%%%%%%%%%%%%%%%%%%%%%%%%%%%%%%%%%%%%%%%%%%%%%%%%%%%
%%%%%%%%%%%%%%%%%%%%%%%%%%%%%%%%%%%%%%%%%%%%%%%%%%%%%%%%%%%%%%%%%%%%%%%%%%%%%%%%%%%%%%%%%%%%%%%%%%%%%%%%%%%%%%%%%%%%%%%%
\begin{center} \subsection*{Acknowledgment} \end{center}
The author thanks CONCYTEC for  partial financial support and also thanks his family for providing him a nice  working environment. \\
%%%%%%%%%%%%%%%%%%%%%%%%%%%%%%%%%%%%%%%%%%%%%%%%%%%%%%%%%%%%%%%%%%%%%%%%%%%%%%%%%%%%%%%%%%%%%%%%%%%%%%%%%%%%%%%%%%%%%%%%
%%%%%%%%%%%%%%%%%%%%%%%%%%%%%%%%%%%%%%%%%%%%%%%%%%%%%%%%%%%%%%%%%%%%%%%%%%%%%%%%%%%%%%%%%%%%%%%%%%%%%%%%%%%%%%%%%%%%%%%%
%%%%%%%%%%%%%%%%%%%%%%%%%%%%%%%%%%%%%%%%%%%%%%%%%%%%%%%%%%%%%%%%%%%%%%%%%%%%%%%%%%%%%%%%%%%%%%%%%%%%%%%%%%%%%%%%%%%%%%%%

%%%%%%%%%%%%%%%%%%%%%%%%%%%%%%%%%%%%%%%%%%%%%%%%%%%%%%%%%%%%%%%%%%%%%%%%%%%%%%%%%%%%%%%%%%%%%%%%%%%%%%%%%%%%%%%%%%%%%%%%
%%%%%%%%%%%%%%%%%%%%%%%%%%%%%%%%%%%%%%%%%%%%%%%%%%%%%%%%%%%%%%%%%%%%%%%%%%%%%%%%%%%%%%%%%%%%%%%%%%%%%%%%%%%%%%%%%%%%%%%%
%%%%%%%%%%%%%%%%%%%%%%%%%%%%%%%%%%%%%%%%%%%%%%%%%%%%%%%%%%%%%%%%%%%%%%%%%%%%%%%%%%%%%%%%%%%%%%%%%%%%%%%%%%%%%%%%%%%%%%%%
%%%%%%%%%%%%%%%%%%%%%%%%%%%%%%%%%%%%%%%%%%%%%%%%%%%%%%%%%%%%%%%%%%%%%%%%%%%%%%%%%%%%%%%%%%%%%%%%%%%%%%%%%%%%%%%%%%%%%%%%

\begin{thebibliography}{abc99}
\bibitem{LS}
B. M. Cerna Magui\~na, Some results on natural numbers represented by quadratic
polynomials in two variables, https://arxiv.org/abs/2006.08 V1[Math.NT].
\bibitem{S}
P. Samuel (2013). Algebraic Theory of Numbers: Translated from the French by Allan J. Silberger. Courier Corporation.
\bibitem{TP}
V.A Trenoguin, B.M Pisarievski, T.S Sovoleva . Editorial MIR, 1987. Problemas y Ejercicios de Análisis Funcional.
\bibitem{CHA}
Charles W. Groetsch. Elements of Applicable Functional Analysis. Marcel Dekker, INC. 
\end{thebibliography}
\end{document}